\newcommand{\comment}[1]{}
\newtheorem{thm}{Theorem}[section]
\newtheorem{cor}[thm]{Corollary}
\newtheorem{lem}[thm]{Lemma}
\newcommand{\be}{\begin{equation}}
\newcommand{\eq}{\end{equation}}
\newcommand{\la}{\langle}
\newcommand{\ra}{\rangle}
\newcommand{\U}{U_{6n}}
\date{}
\begin{document}
	\title{On the Non-Commuting Graph of the Group $U_{6n}$}
	\author{Sanhan Khasraw$^{1}$, C.H. Jaf$^{2}$, Nor Haniza Sarmin$^{3}$, Ibrahim Gambo$^{3}$\\
	$ ^{1}$ Department of Mathematics, College of Basic Education,\\ Salahadddin University-Erbil, Erbil, Iraq.\\ sanhan.khasraw@su.edu.krd\\
	$ ^{2}$ Independent Researcher.\\ cah.barrami@gmail.com\\
	$^{3}$ Department of Mathematical Sciences, Faculty of Science,\\ Universiti Teknologi, Malaysia, 81310 UTM Johor Bahru, Malaysia.\\
	nhs@utm.my, igambo@utm.my\\
		}
	
	\maketitle

	\begin{abstract}
	    A non-commuting graph of a finite group $G$ is a graph whose vertices are non-central elements of $G$ and two vertices are adjacent if they don't commute in $G$. In this paper, we study the non-commuting graph of the group $\U$ and explore some of its properties including the independent number, clique and chromatic numbers. Also, the general formula of the resolving polynomial of the non-commuting graph of the group $U_{6n}$ are provided. Furthermore, we find the detour index, eccentric connectivity, total eccentricity and independent polynomials of the graph.    
	\end{abstract}
	\hspace{1cm}\\
	\text{\bf Keywords}: Non-commuting graph, independent number, chromatic number, clique number, resolving polynomial of a graph.
	
	\section{Introduction}
	In the last three decades, the interrelation of the structure in graphs and algebra has provided us some interesting results and the topic has earned significant attention from the research community, for example see \cite{bondy1976graph, mirzargar2012some}.
	
	Suppose that $G$ is a finite group. We denote the center of $G$ by $Z(G)$ and the centralizer of an element $a$ in $G$ by $C_G(a)$. If $C_G(a)$ is abelian for every $a \in G$, then $G$ is said to be an \textit{AC-group} \cite{mirzargar2012some}. The \textit{non-commuting graph} $\Gamma(G)$ of $G$ has $G-Z(G)$ as its vertex set in which two vertices are adjacent if they don't commute in $G$. The Hungarian mathematician Paul Erdos was the first to introduce the concept of non-commuting graph of a group in the 20th century. Since then, the topic has been widely studied by researchers in the field, see \cite{abdollahi2006non, erdosproblem, vatandoost2018domination}. It is worth-noting that if the underlying group is abelian then the non-commuting graph has no element since in that case $G=Z(G)$. In this paper we assume $G$ to be the non-abelian group $U_{6n}$, which will be defined later.	
	As a matter of fact there are some researches studying commuting graph of a group, for instance \cite{ali2016commuting}, and there are several articles about the non-commuting graph of a group, for example, Abdollahi \textit{et al.} \cite{abdollahi2006non} investigated on the non-commuting graph of finite groups whereas Talebi \cite{Talebi} has conducted the same investigation for the dihedral groups. 
	
	Most recently, Khasraw \textit{et al.} \cite{Skhasraw} investigated the detour index, eccentric connectivity, total eccentricity polynomials and the mean distance of the non-commuting graph of dihedral group.
		
	Some fundamental concepts that are related to this research throughout this paper are provided in what follows. The graphs considered in this paper are simple, that is, undirected with no loops or multiple edges, and a graph is \textit{finite} if both its vertex and edge sets are finite. Hence, we denote the vertex and edge sets of the graph $\Gamma$ respectively by $V(\Gamma)$ and $E(\Gamma)$ while we denote the number of vertices in the graph $\Gamma$ by $n(\Gamma)$ and the number of its edges by $|E(\Gamma)|$.  
	
	Let $k \ge 2$. A sequence of $k$ vertices in which each vertex in the sequence is adjacent to a vertex next to it is known as a \textit{path} of a graph, denoted by $P_k$. A path that does not repeat vertices is called a \textit{simple path}. A path that starts and ends at the same vertex is referred to us \textit{circuit} while any circuit that does not repeat vertices is a \textit{cycle}, denoted as $C_n$, where $n \ge 3$ \cite{bondy1976graph, Die}. A graph is said to be \textit{connected} if for every pair of vertices $u$ and $v$, there is a path from $u$ to $v$, while a \textit{disconnected} graph consists of connected pieces called components. If vertices $u$ and $v$ are connected in $\Gamma$, the distance(detour distance) between $u$ and $v$, denoted by $d(u, v)(D(u, v))$, is the length of a shortest(longest) $u-v$ path in $\Gamma$. For a given vertex $v$ in $\Gamma$, the maximum distance between $v$ and any other vertex in $\Gamma$ is called the \textit{eccentricity} of $v$, denoted by $ecc(v)$. The \textit{degree} $\deg(v)$ of a vertex $v$ is the number of edges incident with $v$ \cite{bondy1976graph}. 
	
	For a graph $\Gamma$, the polynomials $D(\Gamma, x)=\sum_{u,v \in V(\Gamma)}x^{D(u,v)}$ \cite{shahkoohi2011polynomial}, $\Xi(\Gamma, x)=\sum_{u \in V(\Gamma)}deg_\Gamma(u)x^{ecc(u)}$ and $\Theta(\Gamma, x)=\sum_{u \in V(\Gamma)}x^{ecc(u)}$ \cite{dovslic2011eccentric} are called the \textit{detour index}, \textit{eccentric connectivity }and \textit{total eccentricity polynomials} , respectively. The first derivative of $D(\Gamma, x)$ at $1$ is called the \textit{detour index} of the graph $\Gamma$, and is denoted by $dd(\Gamma)$. 
		
	A graph is called \textit{regular} if all of its vertices have the same degree and a graph is called an \textit{$n-$regular} if all of its vertices have degree $n$ \cite{bondy1976graph}. While the \textit{chromatic number} of a graph $\Gamma$, denoted by $\chi (\Gamma)$, is the minimum number $c$ for which is $c-$vertex colorable \cite{Godsil}. 
	The \textit{clique number} of a graph $\Gamma$, denoted by $\omega(\Gamma)$, is the size of the largest complete subgraph of $\Gamma$. A \textit{vertex cover} of a graph $\Gamma$ is a subset $S$ of $V(\Gamma)$ such that every edge of $\Gamma$ has at least one vertex in $S$. The minimum size of a vertex cover is denoted by $\tau(\Gamma)$ \cite{pemmaraju2003computational}.
	A non-empty set $S$ of $V(\Gamma)$ is called \textit{independent} if no two elements of $S$ are adjacent in $\Gamma$. The \textit{independent number} is the cardinality of a maximum independent set of a graph $\Gamma$ and is denoted by $\alpha(\Gamma)$ \cite{bondy1976graph}. Let $\Gamma$ be a graph. The \textit{independent polynomial} was defined in \cite{indpoly} as follows:
	$I(\Gamma, x)=\sum_{i=0}^{\alpha(\Gamma)}s_ix^i$; where $s_i$ is the number of independent sets of $\Gamma$ of cardinality $i$. While, in \cite{vcpoly}, the \textit{vertex-cover polynomial} is defined as $\psi(\Gamma, x)=\sum_{i=0}^{n(\Gamma)}c_ix^i$; where $c_i$ is the number of vertex covers of $\Gamma$ of cardinality $i$.
	
	For an integer $k\ge 2$, a graph $\Gamma$ is called \textit{$k-$partite} if $V(\Gamma)$ accept a partition into $k$ classes such that every edge has its ends in different classes, and vertices in the same partition class must not be adjacent. If every two vertices from different partition classes are adjacent, then the graph is called \textit{complete $k-$bipartite graph}, denoted by $K_{r_1, r_2, \cdots, r_k}$.
	
	Let $\Gamma$ be a graph. Suppose $W=\{w_1, w_2, \cdots, w_k\}$ is a subset of $V(\Gamma)$. The \textit{representation} of a vertex $v$ of $\Gamma$ is the $k$-vector $r(v|W)=(d(v, w_1), d(v, w_2), \cdots, d(v, w_k))$. If every pair of distinct vertices of $\Gamma$ have distinct representations with respect to $W$, then $W$ is called a \textit{resolving set} for $\Gamma$. The cardinality of a minimum resolving set for $\Gamma$ is called the \textit{metric dimension} of $\Gamma$, denoted by $\beta(\Gamma)$ \cite{metdim}. The \textit{resolving polynomial} of a graph $\Gamma$, denoted by $\beta(\Gamma, x)$, is defined by $\beta(\Gamma, x)=\sum_{i=\beta(\Gamma)}^{n(\Gamma)}r_ix^i$, where $r_i$ is the number of resolving sets for $\Gamma$ of cardinality $i$. The sequence $(r_{\beta(\Gamma)}, r_{\beta(\Gamma)+1}, \cdots, r_{n{\Gamma}})$ is called the \textit{resolving sequence}. The set of all distinct roots of $\beta(\Gamma, x)$ is denoted by $Z(\beta(\Gamma, x))$. 

	The group $U_{6n}$, of order $6n$, is defined by 
	$$U_{6n}=\langle a, b \;|\; a^{2n}=b^3=1, a^{-1}ba=b^{-1} \rangle$$ for $n \ge 1$ with center $Z(U_{6n})=\langle a^2 \rangle$ \cite{james}. Throughout this paper, the elements of $U_{6n}-Z(U_{6n})$ are partitioned into four disjoint sets according to centralizers of elements, see Lemma \ref{cntlzr}, as follows: $\Omega_1=\{a^{2r+1}\,:\,0\leq r\leq n-1\}$, $\Omega_2=\{a^{2r+1}b\,:\,0\leq r\leq n-1\}$, $\Omega_3=\{a^{2r+1}b^2\,:\,0\leq r\leq n-1\}$, and $\Omega_4=\{a^{2r}b^k\,:\,0\leq r\leq n-1,\,k=1,2\}$. It is clear that $|\Omega_1|=|\Omega_2|=|\Omega_3|=n$, and $|\Omega_4|=2n$.
	
	This paper consists of four sections. In the introduction section, some necessary definitions and notations are presented. In Section 2, some properties of the non-commuting graph $\Gamma({U_{6n}})$ of the group $U_{6n}$ are studied. In Section 3, we find the general formula of the resolving polynomial of the non-commuting graph of the group $U_{6n}$. In Section 4, we find the detour index, eccentric connectivity, total eccentricity and independent polynomials of the non-commuting graph $\Gamma({U_{6n}})$.

	\section{Some basic properties of the non-commuting graph $\Gamma(U_{6n})$}
	This section contains some lemmas on non-commuting graph that are used to obtain some important results that follow.
		
	\medskip
	
	We begin with the following lemma, which can be found in \cite{mirzargar2012some}.
	 
	\begin{lem}\label{cntlzr}
		For the group $\U$, and $0\leq r\leq n-1$, we have the following
		\begin{enumerate}
			\item $Z(\U)=\langle a^2\rangle$,
			\item $C_{U_{6n}}(a^{2r+1})=\la a\ra, $
			\item $C_{U_{6n}}(a^{2r+1}b)=\la a^2\ra\cdot\la\{a^{2s+1}b:0\leq s\leq n-1\}\ra$, 
			\item $C_{U_{6n}}(a^{2r+1}b^2)=\la a^2\ra\cdot\la\{a^{2s+1}b^2:0\leq s\leq n-1\}\ra$, 
			\item $C_{U_{6n}}(a^{2r}b)=\la a^2\ra\cdot\la\{a^{2s}b,a^{2s}b^2:0\leq s\leq n-1\}\ra$.
		\end{enumerate}
	\end{lem}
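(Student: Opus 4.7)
The plan is to reduce every computation to a single ``straightening'' formula that lets one move powers of $b$ past powers of $a$. Starting from the defining relation $a^{-1}ba=b^{-1}$, one immediately gets $a^{-k}b^{j}a^{k}=b^{(-1)^{k}j}$ for all integers $k$ and $j\in\{0,1,2\}$, so
\[
b^{j}a^{k}=a^{k}b^{(-1)^{k}j}\qquad (\text{exponents of }b\text{ taken mod }3).
\]
Since every element of $\U$ has a unique normal form $a^{i}b^{j}$ with $0\leq i\leq 2n-1$ and $0\leq j\leq 2$, this is all the arithmetic I will need.

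For part (1), I would observe that $a^{i}b^{j}$ is central iff it commutes with both generators. Using the straightening formula, $(a^{i}b^{j})a=a^{i+1}b^{j}$ whereas $a(a^{i}b^{j})=a^{i+1}b^{(-1)^{1}j}=a^{i+1}b^{-j}$, forcing $b^{2j}=1$, i.e.\ $j=0$. Then $(a^{i})b=b(a^{i})$ together with the same formula forces $i$ to be even, giving $Z(\U)=\la a^{2}\ra$.

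For parts (2)--(5), I would fix a representative $x=a^{i}b^{j}$ of each $\Omega_{t}$ and ask for which $y=a^{k}b^{l}$ one has $xy=yx$. Expanding both sides with the straightening formula and using that the normal form is unique, the equality always collapses to a single condition of the shape $b^{l}=b^{(-1)^{i}l}$ or $b^{j}=b^{(-1)^{k}j}$ depending on the case. Specifically:
\begin{itemize}
\item If $i$ is odd and $j=0$, the condition reduces to $l=0$, so $C_{\U}(a^{2r+1})=\la a\ra$.
\item If $i$ is odd and $j\in\{1,2\}$, the condition becomes ``$k$ is even and $l=0$'' or ``$k$ is odd and $l=j$''; collecting these yields $\la a^{2}\ra\cup\{a^{2s+1}b^{j}:0\leq s\leq n-1\}$, which is exactly $\la a^{2}\ra\cdot\la\{a^{2s+1}b^{j}\}\ra$, giving parts (3) and (4).
\item If $i$ is even and $j\in\{1,2\}$, the condition reduces simply to $k$ being even (any $l$ is allowed), giving all of $\la a^{2}\ra\cup\Omega_{4}$ for part (5).
\end{itemize}

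There is no conceptual obstacle here; the only thing to be careful about is the bookkeeping of parities when chaining the straightening identity, and the verification that the sets I obtain are really the subgroups named in the statement (in particular that $\la\{a^{2s+1}b^{j}\}\ra$ contains $\la a^{2}\ra$, which follows from $(a^{2s+1}b^{j})(a^{2s'+1}b^{j})=a^{2(s+s'+1)}$ after another application of the straightening formula). A cardinality check $|Z|+|\Omega_{1}|+|\Omega_{2}|+|\Omega_{3}|+|\Omega_{4}|=n+n+n+n+2n=6n=|\U|$ then confirms that the four $\Omega_{t}$ genuinely partition $\U-Z(\U)$ according to their centralizers.
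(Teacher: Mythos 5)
Your proposal is correct, but note that the paper does not actually prove this lemma: it is quoted verbatim from Mirzargar and Ashrafi \cite{mirzargar2012some}, so there is no in-paper argument to compare against. Your direct computation via the normal form $a^i b^j$ and the straightening identity $b^j a^k = a^k b^{(-1)^k j}$ is the standard and complete way to establish all five parts, and I verified each case: the parity analysis for (2)--(5) gives exactly the sets claimed, the subgroup identifications (e.g.\ $(a^{2s+1}b^j)(a^{2s'+1}b^j)=a^{2(s+s'+1)}$, so $\la\{a^{2s+1}b^j\}\ra\supseteq\la a^2\ra$) are right, and the final cardinality count $n+n+n+n+2n=6n$ confirms the partition. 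One tiny slip in part (1): you have the two products transposed --- it is $(a^i b^j)a=a^{i+1}b^{-j}$ and $a(a^i b^j)=a^{i+1}b^{j}$, not the other way around --- but the resulting condition $b^{2j}=1$, hence $j=0$, is unaffected. Your proof therefore supplies a self-contained verification that the paper itself omits.
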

	
	\medskip
	
	The following useful lemma is used in calculating the degree of vertices in $\Gamma({U_{6n}})$, which can be found in \cite{abdollahi2006non}.
	 	
	\begin{lem}\label{ddeg}
			Let $G$ be a non-abelian finite group and let $x$ be a vertex of $\Gamma(G)$. Then $\deg (x)=|G|-|C_G(x)|$, where $C_G(x)$ is the centralizer of the element $x$ in $G$. 
		\end{lem}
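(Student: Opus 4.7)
The plan is to unpack the definition of the non-commuting graph and show that the neighbors of $x$ are exactly the elements of $G\setminus C_G(x)$. The identity $\deg(x)=|G|-|C_G(x)|$ then follows by counting.

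First I would recall that $V(\Gamma(G))=G\setminus Z(G)$ and that, by definition, a vertex $y$ is adjacent to $x$ precisely when $y\in V(\Gamma(G))$, $y\neq x$, and $xy\neq yx$. The key elementary observation is that $xy\neq yx$ is equivalent to $y\notin C_G(x)$. So I would set
\[
N(x)=\{\,y\in G\setminus Z(G)\ :\ y\neq x\text{ and }y\notin C_G(x)\,\}
\]
and aim to show $N(x)=G\setminus C_G(x)$.

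Next I would clear away the two side-conditions ``$y\neq x$'' and ``$y\notin Z(G)$'' by noting: (i) $x\in C_G(x)$, so requiring $y\notin C_G(x)$ already forces $y\neq x$; and (ii) $Z(G)\subseteq C_G(x)$ because every central element commutes with $x$, so requiring $y\notin C_G(x)$ already forces $y\notin Z(G)$, hence $y\in V(\Gamma(G))$ automatically. Consequently $N(x)=G\setminus C_G(x)$, and taking cardinalities gives $\deg(x)=|G|-|C_G(x)|$.

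There is essentially no obstacle here; the whole content is the two containments $x\in C_G(x)$ and $Z(G)\subseteq C_G(x)$, which together guarantee that no double-counting or accidental inclusion of $x$ itself occurs. The only thing worth being careful about is confirming that $G$ being non-abelian is implicit in the hypothesis that $x$ is a vertex of $\Gamma(G)$ (so that $V(\Gamma(G))$ is non-empty and the statement is not vacuous), which is already built into the lemma's assumptions.
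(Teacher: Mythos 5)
Your proof is correct and is the standard argument; the paper itself gives no proof of this lemma, simply citing it from Abdollahi et al., and your derivation (reducing adjacency to $y\notin C_G(x)$ and observing that $x\in C_G(x)$ and $Z(G)\subseteq C_G(x)$ absorb the side conditions) is exactly the expected one.
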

		
	\medskip
	
	The above lemmas lead to the following
	\begin{cor}\label{deg}
		Let $n\geq 1$ be an integer and let $\Gamma=\Gamma({U_{6n}})$. Then, for $0\leq r\leq n-1$ and $k=1,2$, we have
		\begin{enumerate}
			\item $\deg_\Gamma(a^{2r+1})= 4n,$
			\item $\deg_\Gamma(a^{2r+1}b^k)= 4n,$
			\item $\deg_\Gamma(a^{2r}b^k)= 3n.$
		\end{enumerate}
	\end{cor}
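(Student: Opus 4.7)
The plan is to apply Lemma \ref{ddeg} to each vertex type, so that the entire corollary reduces to reading the order of the centralizers listed in Lemma \ref{cntlzr} and subtracting from $|U_{6n}|=6n$. Since those centralizers depend only on which class $\Omega_i$ contains the vertex, the resulting degree will be uniform in $r$ and $k$, matching the three stated formulas.

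Concretely, for Part 1 the centralizer $\langle a\rangle$ is cyclic of order $2n$, giving $\deg_\Gamma(a^{2r+1})=6n-2n=4n$. For Part 2, Lemma \ref{cntlzr}(3)--(4) writes $C_{U_{6n}}(a^{2r+1}b^k)$ as $\langle a^2\rangle$ together with the $n$-element set $\{a^{2s+1}b^k:0\le s\le n-1\}$; these two pieces are disjoint, since the first contains only pure powers of $a$ while every element of the second carries a nontrivial $b^k$-factor. Hence $|C_{U_{6n}}(a^{2r+1}b^k)|=n+n=2n$ and $\deg_\Gamma(a^{2r+1}b^k)=4n$. For Part 3, Lemma \ref{cntlzr}(5) likewise describes $C_{U_{6n}}(a^{2r}b^k)$ as the disjoint union of $\langle a^2\rangle$ (of size $n$) with the $2n$-element set $\{a^{2s}b,a^{2s}b^2:0\le s\le n-1\}$, so the centralizer has order $3n$ and $\deg_\Gamma(a^{2r}b^k)=3n$.

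There is essentially no obstacle to overcome here; the corollary is really just a bookkeeping consequence of the two preceding lemmas. The only point that deserves a line of care is parsing the ``$\cdot$'' notation of Lemma \ref{cntlzr}: it must be read as describing each non-cyclic centralizer set-theoretically as a disjoint union of $\langle a^2\rangle$ with a coset of it, and the disjointness is immediate from inspecting the $b$-component of each listed element. Once the three centralizer orders $2n$, $2n$, and $3n$ are in hand, Lemma \ref{ddeg} finishes the proof.
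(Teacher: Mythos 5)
Your proposal is correct and matches the paper's intent exactly: the paper gives no explicit proof, simply noting that Lemma \ref{cntlzr} and Lemma \ref{ddeg} "lead to" the corollary, which is precisely the computation you carry out. Your extra care in parsing the centralizer descriptions to get orders $2n$, $2n$, and $3n$ is a welcome bit of explicitness, not a deviation.
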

	
	\medskip
	
	\begin{thm}\label{EN}
		For $n\geq 1,$ $\Gamma=\Gamma({U_{6n}})$, the number of edges of the graph $\Gamma$ is $|E(\Gamma)|=9n^2$.
	\end{thm}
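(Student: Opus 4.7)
The plan is to apply the handshake lemma, which states that $|E(\Gamma)| = \frac{1}{2}\sum_{v\in V(\Gamma)} \deg_\Gamma(v)$, and then sum the degrees using Corollary \ref{deg} together with the partition of $V(\Gamma) = U_{6n} - Z(U_{6n})$ into the sets $\Omega_1, \Omega_2, \Omega_3, \Omega_4$ described just before the theorem.

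First I would recall that $|\Omega_1| = |\Omega_2| = |\Omega_3| = n$ and $|\Omega_4| = 2n$, and that every vertex in $\Omega_1 \cup \Omega_2 \cup \Omega_3$ has degree $4n$ (parts (1) and (2) of Corollary \ref{deg}), while every vertex in $\Omega_4$ has degree $3n$ (part (3)). Since the four sets partition $V(\Gamma)$, I can split the vertex sum cleanly and compute
\[
\sum_{v\in V(\Gamma)}\deg_\Gamma(v) = 3(n\cdot 4n) + (2n)(3n) = 12n^2 + 6n^2 = 18n^2.
\]
Dividing by $2$ then yields $|E(\Gamma)| = 9n^2$, as required.

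There is no real obstacle here: all the analytical work has already been carried by Lemma \ref{cntlzr} and Lemma \ref{ddeg}, which together produce the degree formulas of Corollary \ref{deg}. The only thing to be careful about is bookkeeping, namely checking that the $\Omega_i$ genuinely partition the non-central elements (so that no vertex is counted twice and none is omitted) and that the contribution from $\Omega_4$ uses the correct cardinality $2n$ rather than $n$, since it pools the elements $a^{2r}b$ and $a^{2r}b^2$.
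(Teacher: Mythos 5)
Your proposal is correct and follows the same route as the paper: both apply the handshake lemma to the degree formulas of Corollary \ref{deg}, obtaining $\frac{1}{2}(12n^2+6n^2)=9n^2$. Your version merely spells out the partition bookkeeping that the paper leaves implicit.
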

	\begin{proof}
		From Corollary \ref{deg}, we have $|E(\Gamma)|=\frac{1}{2}\sum_{x\in\Gamma} \deg(x)=\frac{1}{2}\left(12n^2+6n^2\right)=9n^2$. 
	\end{proof}
	
	\medskip
	
	\begin{thm}\label{Kpartite}
		For $n\geq 1$, let $\Gamma=\Gamma({U_{6n}})$ and $\Omega$ is a subset of $U_{6n}$. Then $\Gamma=K_{n,n,n,2n}$ if and only if $\Omega=\Omega_1\cup\Omega_2\cup\Omega_3\cup\Omega_4$.
	\end{thm}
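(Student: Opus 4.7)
The plan is to identify $\Omega_1, \Omega_2, \Omega_3, \Omega_4$ as the four partition classes witnessing $\Gamma = K_{n,n,n,2n}$. First I would verify that these four sets partition $V(\Gamma) = U_{6n} - Z(U_{6n})$. Since $Z(U_{6n}) = \langle a^2\rangle$ consists of the even powers of $a$ (with no $b$-factor), every non-central element has either an odd power of $a$ together with some $b^k$ ($k \in \{0,1,2\}$) or an even power of $a$ together with $b$ or $b^2$; these four families are exactly $\Omega_1, \Omega_2, \Omega_3, \Omega_4$, with cardinalities $n, n, n, 2n$.

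The second step is to show that no two vertices in the same $\Omega_i$ are adjacent in $\Gamma$. For $\Omega_1$ this is immediate because $\Omega_1 \subset \langle a\rangle$ is abelian. For $\Omega_2$ and $\Omega_3$, parts (3) and (4) of Lemma \ref{cntlzr} show that $\Omega_i$ sits inside the centralizer of every one of its elements, so all pairs commute. For $\Omega_4$, Lemma \ref{cntlzr}(5) shows that the abelian subgroup $\langle a^2, b\rangle$ contains all of $\Omega_4$, so commutation within $\Omega_4$ holds for both the $b$- and the $b^2$-representatives simultaneously.

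The decisive step is then a degree count using Corollary \ref{deg}. For $v \in \Omega_i$ with $i \in \{1,2,3\}$, $\deg_\Gamma(v) = 4n$ while $|V(\Gamma) \setminus \Omega_i| = n+n+2n = 4n$; combined with the previous step (no neighbors inside $\Omega_i$), this forces $v$ to be adjacent to every vertex outside $\Omega_i$. The analogous count for $v \in \Omega_4$ gives $\deg_\Gamma(v) = 3n = |\Omega_1|+|\Omega_2|+|\Omega_3|$, again making $v$ adjacent to all vertices outside its own class. Hence $\Gamma$ has all possible cross-part edges and no within-part edges, which is precisely $K_{n,n,n,2n}$. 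The converse direction is immediate because $V(\Gamma)$ is always the disjoint union $\Omega_1\cup\Omega_2\cup\Omega_3\cup\Omega_4$.

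I do not anticipate significant technical obstacles; the most delicate bookkeeping is in verifying commutation within $\Omega_4$, but an elementary use of the defining relation $a^{-1}ba = b^{-1}$ (which yields $b\, a^{2s} = a^{2s} b$ and reduces any product $a^{2r}b^i \cdot a^{2s}b^j$ to $a^{2(r+s)}b^{\,i+j}$) makes this routine.
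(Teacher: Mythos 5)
Your proof is correct, but it distributes the work differently from the paper. The paper's forward direction is a one-line appeal to Lemma \ref{cntlzr}: it observes that for each vertex $x\in\Omega_i$ the set of vertices commuting with $x$ is exactly $\Omega_i$ (i.e.\ $C_\Omega(x)=\Omega_i$), which simultaneously yields independence within each part and completeness between distinct parts, so $\Gamma=K_{n,n,n,2n}$ falls out at once; Corollary \ref{deg} is invoked only for the converse. You instead split the forward direction into two steps: independence within each $\Omega_i$ proved directly from the centralizer descriptions (including the slightly delicate check that all of $\Omega_4$ lies in the abelian subgroup $\la a^2, b\ra$), and then completeness between parts forced by the degree count of Corollary \ref{deg} --- since $\deg(v)$ equals the number of vertices outside $v$'s own class, a vertex with no neighbours inside its class must be adjacent to everything outside it. Both routes rest on the same centralizer computations (Corollary \ref{deg} is itself a consequence of Lemmas \ref{cntlzr} and \ref{ddeg}), but yours makes the cross-part adjacency an explicit counting deduction rather than a direct reading of the centralizers, which is more verifiable at the cost of a small detour; it also has the merit of explicitly checking that $\Omega_1\cup\Omega_2\cup\Omega_3\cup\Omega_4$ exhausts $U_{6n}-Z(U_{6n})$, a point the paper leaves implicit.
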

	\begin{proof} 
		Assume $\Omega=\Omega_1\cup\Omega_2\cup\Omega_3\cup\Omega_4$. Then $C_\Omega(a^{2r+1})=\Omega_1, C_\Omega(a^{2r+1}b)=\Omega_2, C_\Omega(a^{2r+1}b^2)=\Omega_3$ and $C_\Omega(a^{2r}b^k)=\Omega_4$ for $k=1,2$, so $\Gamma=K_{n,n,n,2n}$. Conversely, assume $\Gamma=K_{n,n,n,2n}$, then by Corollary \ref{deg}, $\Omega=\Omega_1\cup\Omega_2\cup\Omega_3\cup\Omega_4$.
	\end{proof}
	
	In \cite{bondy1976graph}, the relation between the independent number and the vertex cover with number of vertices has been given as in the following lemma.
	 
	\begin{lem}\label{a+b=n}
		Let $\Gamma$ be a graph. Then $\alpha(\Gamma)+\tau(\Gamma)=n(\Gamma)$.
	\end{lem}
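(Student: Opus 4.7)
The plan is to prove the identity by exhibiting a bijective correspondence between independent sets and vertex covers given by complementation, and then extracting the extremal values.

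First I would establish the key equivalence: a set $S\subseteq V(\Gamma)$ is independent if and only if its complement $V(\Gamma)\setminus S$ is a vertex cover. The forward direction: if $S$ is independent, then no edge has both endpoints in $S$, so every edge has at least one endpoint in $V(\Gamma)\setminus S$, making the complement a vertex cover. The reverse direction is essentially the contrapositive: if $V(\Gamma)\setminus S$ is a vertex cover, then no edge can lie entirely in $S$, so $S$ is independent.

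Next I would use this equivalence twice to obtain matching inequalities. Taking $S^*$ to be a maximum independent set of size $\alpha(\Gamma)$, its complement is a vertex cover of size $n(\Gamma)-\alpha(\Gamma)$, which gives $\tau(\Gamma)\le n(\Gamma)-\alpha(\Gamma)$. Conversely, taking $T^*$ to be a minimum vertex cover of size $\tau(\Gamma)$, its complement is an independent set of size $n(\Gamma)-\tau(\Gamma)$, giving $\alpha(\Gamma)\ge n(\Gamma)-\tau(\Gamma)$. Rearranging these two inequalities yields $\alpha(\Gamma)+\tau(\Gamma)\le n(\Gamma)$ and $\alpha(\Gamma)+\tau(\Gamma)\ge n(\Gamma)$, so equality holds.

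There is no real obstacle in this proof; the only subtlety is making sure the equivalence between independent sets and complements of vertex covers is set up cleanly, since everything else follows by taking extrema on each side of the correspondence. The statement is standard folklore (the result is already cited to \cite{bondy1976graph}), so the write-up can be brief: state the complementation lemma, then use it in both directions.
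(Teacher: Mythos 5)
Your proof is correct and complete: the complementation equivalence (a set is independent iff its complement is a vertex cover) together with taking extrema on both sides is the standard argument for this identity (Gallai's theorem). The paper itself offers no proof, merely citing Bondy and Murty, so there is nothing to compare against; your write-up supplies exactly the argument that the cited source gives. The only cosmetic point is that the paper's definition of an independent set requires non-emptiness, but this never bites, since for $n(\Gamma)\ge 1$ a minimum vertex cover always omits at least one vertex, so its complement is a non-empty independent set.
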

	
	\medskip
	
	\begin{thm}\label{alpha}
		For the graph $\Gamma=\Gamma({U_{6n}})$, $\alpha(\Gamma)=2n$.
	\end{thm}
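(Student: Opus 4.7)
The plan is to leverage Theorem \ref{Kpartite}, which identifies $\Gamma(U_{6n})$ with the complete multipartite graph $K_{n,n,n,2n}$ whose partition classes are exactly $\Omega_1,\Omega_2,\Omega_3,\Omega_4$. Since in a complete multipartite graph every pair of vertices lying in distinct partition classes is adjacent, any independent set $S$ of $\Gamma$ must be entirely contained in a single $\Omega_i$; conversely, each $\Omega_i$ is itself an independent set because all of its elements share the same centralizer (by Lemma \ref{cntlzr}) and hence commute pairwise.

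From these two observations I would conclude
\[
\alpha(\Gamma) \;=\; \max\bigl\{|\Omega_1|,|\Omega_2|,|\Omega_3|,|\Omega_4|\bigr\} \;=\; \max\{n,n,n,2n\} \;=\; 2n,
\]
with the maximum achieved by taking $S=\Omega_4$.

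The only step that requires any care is verifying the first direction: that an independent set cannot straddle two of the $\Omega_i$'s. This follows directly from Theorem \ref{Kpartite}, since any two vertices from different partition classes are joined by an edge in the complete multipartite graph. Alternatively, one can give a one-line algebraic justification by noting that if $x\in\Omega_i$ and $y\in\Omega_j$ with $i\neq j$, then Lemma \ref{cntlzr} shows $y\notin C_{U_{6n}}(x)$, so $x$ and $y$ do not commute and hence are adjacent in $\Gamma$. No serious obstacle is expected; the entire argument is a direct application of Theorem \ref{Kpartite} together with the elementary fact that $\alpha(K_{r_1,\dots,r_k})=\max_i r_i$.
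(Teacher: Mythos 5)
Your proposal is correct and follows essentially the same route as the paper: both invoke Theorem \ref{Kpartite} to view $\Gamma$ as the complete $4$-partite graph $K_{n,n,n,2n}$ and then read off $\alpha(\Gamma)$ as the size of the largest part, $|\Omega_4|=2n$. You simply make explicit the step (implicit in the paper) that an independent set cannot meet two distinct parts, which is a worthwhile clarification but not a different argument.
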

	\begin{proof}
		From Lemma \ref{cntlzr} and Theorem \ref{Kpartite}, one can see that $\Omega_4$ is the largest  part of the 4-partite graph $K_{n,n,n,2n}$. Thus, $\alpha(\Gamma)=2n.$
	\end{proof}
	
	\medskip
	
	\begin{cor}
		For the graph $\Gamma=\Gamma({U_{6n}})$, $\tau(\Gamma)=3n$.
	\end{cor}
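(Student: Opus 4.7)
The plan is to invoke Lemma \ref{a+b=n} directly: since $\alpha(\Gamma)+\tau(\Gamma)=n(\Gamma)$, it suffices to compute the number of vertices $n(\Gamma)$ and combine with Theorem \ref{alpha}.

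First I would count the vertices. By Lemma \ref{cntlzr}(1), $Z(U_{6n})=\langle a^2\rangle$, and since $a$ has order $2n$, this center has order $n$. Hence $n(\Gamma)=|U_{6n}|-|Z(U_{6n})|=6n-n=5n$. As a sanity check, this matches the partition used in Theorem \ref{Kpartite}, namely $|\Omega_1|+|\Omega_2|+|\Omega_3|+|\Omega_4|=n+n+n+2n=5n$.

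Next I would apply Theorem \ref{alpha}, which gives $\alpha(\Gamma)=2n$. Substituting into Lemma \ref{a+b=n} yields $\tau(\Gamma)=n(\Gamma)-\alpha(\Gamma)=5n-2n=3n$, which is the desired conclusion.

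There is essentially no obstacle here; the corollary is an immediate bookkeeping consequence of the preceding two results, so the only care needed is to correctly identify $|Z(U_{6n})|=n$ (and not $2n$) when computing $n(\Gamma)$.
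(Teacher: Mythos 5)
Your proposal is correct and follows exactly the same route as the paper, which deduces the corollary from Lemma \ref{a+b=n} and Theorem \ref{alpha}; you merely make explicit the vertex count $n(\Gamma)=6n-n=5n$, which the paper leaves implicit.
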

	\begin{proof}
		The result follows from Lemma \ref{a+b=n} and Theorem \ref{alpha}.
	\end{proof}
	
	\medskip
	
	\begin{thm}
		For the graph $\Gamma=\Gamma({U_{6n}})$, we have $\chi(\Gamma)=\omega(\Gamma)=4.$
	\end{thm}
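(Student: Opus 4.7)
The plan is to leverage Theorem \ref{Kpartite}, which identifies $\Gamma(U_{6n})$ with the complete 4-partite graph $K_{n,n,n,2n}$, and then use the fact that both the clique number and chromatic number of a complete multipartite graph are determined by the number of parts.

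First I would handle the clique number. Since $\Gamma = K_{n,n,n,2n}$, any choice of one vertex from each of the four parts $\Omega_1,\Omega_2,\Omega_3,\Omega_4$ yields a set of four pairwise adjacent vertices, so $\omega(\Gamma) \ge 4$. For the reverse inequality, a clique cannot contain two vertices from the same part $\Omega_i$ (since vertices within a part are non-adjacent in a complete multipartite graph, as they share a common centralizer by Lemma \ref{cntlzr}), so any clique has at most one vertex per part, giving $\omega(\Gamma) \le 4$. Hence $\omega(\Gamma) = 4$.

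Next I would bound the chromatic number. The general inequality $\chi(\Gamma) \ge \omega(\Gamma)$ gives $\chi(\Gamma) \ge 4$ immediately. For the upper bound, I would exhibit a proper 4-coloring by assigning color $i$ to every vertex of $\Omega_i$ for $i = 1,2,3,4$. Since edges of $\Gamma$ only occur between distinct parts, this coloring is proper, so $\chi(\Gamma) \le 4$. Combining, $\chi(\Gamma) = \omega(\Gamma) = 4$.

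There is no real obstacle here; the argument is essentially a one-line observation about complete multipartite graphs, with all the structural work already packaged into Theorem \ref{Kpartite}.
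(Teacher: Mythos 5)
Your proposal is correct, and the clique-number half matches the paper exactly: both arguments observe that a clique can take at most one vertex from each part of $K_{n,n,n,2n}$ and one vertex from each part forms a $K_4$. Where you diverge is the chromatic number. The paper obtains $\chi(\Gamma)=\omega(\Gamma)$ by invoking the fact that $U_{6n}$ is an AC-group (a known property of non-commuting graphs of AC-groups, going back to Abdollahi et al.), whereas you argue directly: $\chi(\Gamma)\ge\omega(\Gamma)=4$ by the standard inequality, and $\chi(\Gamma)\le 4$ because coloring each class $\Omega_i$ with color $i$ is proper, since by Theorem \ref{Kpartite} every edge joins distinct classes. Your route is more elementary and self-contained --- it uses nothing beyond the complete multipartite structure already established --- while the paper's route is shorter on the page but rests on an external result about AC-groups that it does not reprove. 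Both are valid; yours has the advantage that a reader need not know (or trust) the AC-group fact to verify the claim.
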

	\begin{proof}
		By Theorem \ref{Kpartite}, the clique of $\Gamma$ can only contain one vertex from each $\Omega_i(i=1,...,4)$. Therefore, $\omega(\Gamma)=4$. Since the group $U_{6n}$ is an AC-group, then $\chi(\Gamma)=\omega(\Gamma)$.
	\end{proof}
	\medskip
	
	\begin{thm}\label{cg}
		Let $\Gamma=\Gamma({U_{6n}})$, and $\Omega=\Omega_1\cup \Omega_2\cup \Omega_3\cup \Omega_4$. There exist no subset $S$ of $\Omega$ such that $\Gamma=C_5$.  
	\end{thm}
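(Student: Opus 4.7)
The plan is to derive a contradiction from the multipartite structure established in Theorem \ref{Kpartite}. Since $\Gamma = K_{n,n,n,2n}$ is a complete 4-partite graph, two vertices of $\Omega$ are adjacent in $\Gamma$ if and only if they lie in distinct parts $\Omega_i$ and $\Omega_j$; equivalently, two vertices are non-adjacent if and only if they lie in the same part. I would exploit this equivalence on any candidate 5-cycle.

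Suppose, for contradiction, that there exists $S = \{v_1,v_2,v_3,v_4,v_5\} \subseteq \Omega$ such that the induced subgraph on $S$ is $C_5$ with edges $v_1v_2, v_2v_3, v_3v_4, v_4v_5, v_5v_1$. The non-edges of $C_5$ are then $v_1v_3, v_1v_4, v_2v_4, v_2v_5, v_3v_5$. First I would observe that $v_1v_3 \notin E(\Gamma)$ forces $v_1$ and $v_3$ to lie in the same part $\Omega_j$; similarly, $v_1v_4 \notin E(\Gamma)$ forces $v_1$ and $v_4$ to lie in the same part $\Omega_j$. By transitivity, $v_3$ and $v_4$ lie in the same part $\Omega_j$. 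But $v_3v_4$ is an edge of the induced $C_5$, so $v_3$ and $v_4$ must belong to different parts, a contradiction.

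Thus no such $S$ exists. The argument is essentially structural and independent of $n$; the only fact used beyond Theorem \ref{Kpartite} is the elementary characterization of adjacency in a complete multipartite graph. There is no real obstacle here, only the bookkeeping of verifying which pairs $v_iv_j$ are non-edges of $C_5$, which is immediate from the cycle pattern. Indeed, the same reasoning shows that no complete multipartite graph contains an induced odd cycle of length at least five, but for the statement of the theorem it suffices to rule out $C_5$ as above.
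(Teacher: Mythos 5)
Your proof is correct, and it takes a genuinely different route from the paper's. The paper also starts from Theorem \ref{Kpartite}, but argues by pigeonhole that some part $\Omega_i$ contains at least two of the five vertices and then enumerates cases according to how the remaining three vertices are distributed among the other parts, identifying the induced subgraph in each case as a complete multipartite graph ($K_{2,3}$, $K_{2,2,1}$, or $K_{2,1,1,1}$) and noting that none of these is $C_5$. Your argument dispenses with the case analysis entirely: you use only the fact that in a complete multipartite graph non-adjacency is an equivalence relation (``lying in the same part''), pick two non-edges of $C_5$ sharing the vertex $v_1$, namely $v_1v_3$ and $v_1v_4$, and conclude by transitivity that $v_3$ and $v_4$ lie in a common part even though $v_3v_4$ is an edge of the cycle. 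This is shorter, requires no bookkeeping over distributions of vertices into parts, and, as you note, generalizes immediately (indeed it shows that every induced subgraph of a complete multipartite graph is again complete multipartite, so no induced cycle of length at least five can occur). As a minor side benefit, your approach also sidesteps a small incompleteness in the paper's enumeration, which does not explicitly treat distributions such as $3+1+1$ or $4+1$; these are harmless for the paper's conclusion but are simply never an issue in your transitivity argument.
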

	\begin{proof}
		Suppose that $\Gamma=C_5$. Then at least two vertices, say $v_1$ and $v_2$, on $\Gamma$ belong to some $\Omega_i$ for $i \in \{1, 2, 3, 4\}$. There are three cases to consider. \textbf{Case 1.} If the other three vertices belong to $\Omega_j$, $j \in \{1, 2, 3, 4\}$ and $j \neq i$, then $\Gamma=K_{2, 3}$, which is contradiction. \textbf{Case 2.} If two of the other three vertices belong to the same $\Omega_j$ and the third one belongs to $\Omega_k$, where $j \neq i \neq k$, then $\Gamma=K_{2, 2, 1}$, which is also a contradiction. \textbf{Case 3.} If each of the other three vertices belongs to a different $\Omega_j$, $j \in \{1, 2, 3, 4\}$ and $j \neq i$, then $\Gamma=K_{2, 1, 1, 1}$, which is again a contradiction.   
	\end{proof}
	
	\medskip
	
	\begin{thm}
		Let $\Gamma=\Gamma({U_{6n}})$, and $\Omega$ is a subset of $V(\Gamma)$. Then $\Gamma\neq P_k$ for $k\geq 4$.
	\end{thm}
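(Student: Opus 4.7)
The plan is to leverage Theorem \ref{Kpartite}, which shows that $\Gamma=\Gamma(U_{6n})=K_{n,n,n,2n}$ with parts $\Omega_1,\Omega_2,\Omega_3,\Omega_4$. The key consequence is the non-adjacency criterion: two vertices of $\Gamma$ are \emph{non-adjacent} if and only if they lie in the same $\Omega_i$. Interpreting the statement in the same way as Theorem \ref{cg} (no vertex subset induces $P_k$), the whole argument reduces to exploiting this criterion on the small graph $P_4$.

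First I would reduce to the case $k=4$. For any $k\geq 5$, a path $P_k$ contains (as an induced subgraph) the subpath on any four consecutive vertices, which is a $P_4$. Hence if $\Gamma$ contains no induced $P_4$, it contains no induced $P_k$ for any $k\geq 4$.

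Next, I would argue by contradiction: suppose vertices $v_1,v_2,v_3,v_4\in V(\Gamma)$ induce a path with edges $v_1v_2$, $v_2v_3$, $v_3v_4$ and non-edges $v_1v_3$, $v_1v_4$, $v_2v_4$. By the non-adjacency criterion, the non-edges $v_1v_3$ and $v_1v_4$ force $v_3$ and $v_4$ both to lie in the same $\Omega_i$ as $v_1$; since the parts are disjoint this places $v_3$ and $v_4$ together in one $\Omega_i$. But then $v_3v_4$ is a non-edge of $\Gamma$, contradicting the requirement that $v_3v_4$ is an edge of $P_4$. This contradiction finishes the argument.

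There is really no serious obstacle here; the proof is a one-line application of the multipartite structure from Theorem \ref{Kpartite}, together with the trivial observation that $P_4$ embeds as an induced subgraph of every longer path. The only thing to be a little careful about is to state the reduction $P_k\mapsto P_4$ as an \emph{induced} subgraph statement (four consecutive vertices on $P_k$ induce a $P_4$), so that the induced-subgraph argument for $P_4$ transfers correctly.
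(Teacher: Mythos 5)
Your proof is correct, and it is tighter than the paper's. Both arguments ultimately rest on the same structural fact --- by Theorem \ref{Kpartite} the subgraph induced on any $\Omega$ is complete multipartite with parts among $\Omega_1,\dots,\Omega_4$, so non-adjacency is an equivalence relation (``lying in the same part'') --- but you deploy it differently. The paper builds up incrementally: it first exhibits $P_2$ and $P_3$, then argues that any fourth vertex $w$ added to a $P_3$ on $\{x,y,z\}$ either fails to commute with both $x$ and $y$ (creating two extra edges) or commutes with one of them and hence, by the partition, is forced to be adjacent to the other; either way the result is not $P_4$. You instead read off the non-edge pattern of an induced $P_4$ directly: the non-edges $v_1v_3$ and $v_1v_4$ place $v_3$ and $v_4$ in the same part as $v_1$, contradicting the required edge $v_3v_4$. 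Your version has two advantages: it avoids the paper's somewhat loose case analysis (which also contains notational slips, writing $d$ where $w$ is meant), and it explicitly disposes of $k\ge 5$ by passing to four consecutive vertices of the path, which induce a $P_4$ --- a reduction the paper never states, since its proof only ever addresses $P_4$ itself.
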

	\begin{proof}
		For $k < 4$, we show that, $\Gamma=P_2$ or $P_3$.\\
		\textbf{Case 1}. $\Omega=\{x,y\}$ where $x\notin C_{U_{6n}}(y)$, then $\Gamma=P_2$.\\
		\textbf{Case 2.} Let $\Omega=\{x,y,z\}$ where $x\notin C_{U_{6n}}(y)$. If $z\notin C_{U_{6n}}(y)$ and $z\notin C_{U_{6n}}(y)$ then $\Gamma=C_3$. But if $z$ is either in $C_{U_{6n}}(x)$ or $C_{U_{6n}}(y)$ then $\Gamma=P_3$.\\
		If we add one more element, say $w$, to $\Omega$ then we have two possibilities; either $w\notin C_{U_{6n}}(x)$ and $w\notin C_{U_{6n}}(y)$, and this implies that there will be edges $d\sim x$ and $d\sim y$, which means $\Gamma\neq P_4$, or $w$ is in the centralizer of one of them, in this case say $w\in C_{U_{6n}}(x)$, implies that there will be an edge $d\sim y$, which again means $\Gamma\neq P_4$.
	\end{proof}
	
	\medskip
	
	\begin{thm}
			Let $\Gamma=\Gamma({U_{6n}})$, and $\Omega$ is a subset of $V(\Gamma)$. Then $\Gamma$ is $2n-$regular if and only if $\Omega=\Omega_1\cup \Omega_2\cup \Omega_3$.
	\end{thm}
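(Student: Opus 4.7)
The plan is to prove both directions using the centralizer data of Lemma \ref{cntlzr}. Let $\omega_i := |\Omega \cap \Omega_i|$ for $i \in \{1,2,3,4\}$. The key observation is that for any $v \in \Omega \cap \Omega_i$, Lemma \ref{cntlzr} yields $C_{U_{6n}}(v) \cap V(\Gamma) = \Omega_i$, so the degree of $v$ in the induced subgraph on $\Omega$ equals $|\Omega| - \omega_i$.

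For the easy direction ($\Leftarrow$), taking $\Omega = \Omega_1 \cup \Omega_2 \cup \Omega_3$ gives $|\Omega| = 3n$ and $\omega_1 = \omega_2 = \omega_3 = n$, so every vertex has degree $3n - n = 2n$ and $\Gamma$ is $2n$-regular.

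For the converse ($\Rightarrow$), suppose $\Gamma$ is $2n$-regular. Then $|\Omega| - \omega_i = 2n$ for every index $i$ with $\omega_i > 0$, so all nonempty $\omega_i$'s share a common value $c := |\Omega| - 2n$; if $k$ denotes the number of nonempty parts, summing yields $|\Omega| = kc$ and hence $c = 2n/(k-1)$. I would then case-split on $k$ against the sizes $|\Omega_1| = |\Omega_2| = |\Omega_3| = n$ and $|\Omega_4| = 2n$, following the block-union convention used in Theorem \ref{Kpartite}. The value $k = 1$ forces $c \leq 0$; $k = 2$ forces $c = 2n$ in two distinct parts, impossible since only $\Omega_4$ has size $2n$; $k = 4$ forces $c = 2n/3$, incompatible with the permissible block sizes; and for $k = 3$ with $c = n$, including $\Omega_4$ (of size $2n$) violates the block-union convention, leaving $\Omega = \Omega_1 \cup \Omega_2 \cup \Omega_3$.

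The main obstacle is the converse case-analysis, particularly handling the possibility that $\Omega$ contains vertices from $\Omega_4$. One must carefully argue that, under the intended reading that $\Omega$ consists of whole partition blocks, each $\omega_i$ can only be $0$ or $|\Omega_i|$, so that only the configuration $\omega_1 = \omega_2 = \omega_3 = n$ with $\omega_4 = 0$ survives.
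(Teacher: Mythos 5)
Your proof is correct under the reading you adopt, and it is a genuinely different (and more careful) route than the paper's. The paper disposes of both directions in two lines by citing Corollary~\ref{deg}; strictly speaking that corollary gives degrees in the full graph $\Gamma(U_{6n})$ (namely $4n$ and $3n$), not in the induced subgraph on $\Omega$, so the paper's forward implication is essentially an assertion. You instead derive the induced degree $\deg(v)=|\Omega|-\omega_i$ for $v\in\Omega\cap\Omega_i$ directly from the centralizer partition (adjacency is exactly ``different blocks''), which handles the backward direction cleanly and sets up an honest case analysis on the number $k$ of nonempty parts for the forward one. Your analysis also exposes something the paper glosses over entirely: for an arbitrary subset $\Omega\subseteq V(\Gamma)$ the statement is false --- for instance $\Omega=\Omega_1\cup\Omega_2\cup S$ with $S\subset\Omega_4$ and $|S|=n$ induces $K_{n,n,n}$, which is $2n$-regular, and when $3\mid n$ one can take $2n/3$ vertices from each of the four blocks to get a $2n$-regular $K_{2n/3,2n/3,2n/3,2n/3}$. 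So the ``block-union convention'' you invoke is not a cosmetic assumption but is necessary for the theorem to hold at all; once it is imposed, each $\omega_i\in\{0,|\Omega_i|\}$ and your case split closes. What your approach buys is rigor and an explicit diagnosis of the hypothesis the theorem silently needs; what the paper's buys is brevity at the cost of citing degree formulas that do not literally apply to induced subgraphs.
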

	\begin{proof}
	If the graph $\Gamma$ is $2n-$regular, then every vertex in $\Gamma$ has degree $2n$. By Corollary \ref{deg}, $\Omega=\Omega_1\cup \Omega_2\cup \Omega_3$. Conversely, Let $\Omega=\Omega_1\cup \Omega_2\cup \Omega_3$. By Corollary \ref{deg}, $deg(u)=2n$ for all $u \in \Omega$. Thus, $\Gamma$ is a $2n-$regular graph.
	\end{proof}
	\section{Resolving polynomial of the non-commuting graph $\Gamma(U_{6n})$}
   
    In this section, we find the metric dimension and resolving polynomial of the non-commuting graph $\Gamma(U_{6n})$. 
   
    We start by the following useful lemma about resolving polynomial $\beta(\Gamma, x)$ of a graph $\Gamma$ of order $n$.
    
    \begin{lem}\label{resolv}
    If $\Gamma$ is a connected graph of order $n$, then it has only one resolving set of cardinality $n$, which is $V(\Gamma)$, and $n$ resolving sets of cardinality $n-1$. 
    \end{lem}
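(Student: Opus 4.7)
The plan is to verify both claims essentially by inspecting the zeros in the representation vectors, since each vertex of $\Gamma$ contained in a subset $W\subseteq V(\Gamma)$ forces a zero coordinate in its own representation.

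For the cardinality $n$ part, there is only one subset of $V(\Gamma)$ of size $n$, namely $V(\Gamma)$ itself, so the point is just to check that $W=V(\Gamma)$ is resolving. Listing $V(\Gamma)=\{v_1,\dots,v_n\}$, the representation $r(v_i\mid W)$ has $d(v_i,v_i)=0$ in the $i$-th coordinate and, because $\Gamma$ is connected (so distances are finite) and $v_j\neq v_i$ implies $d(v_i,v_j)\geq 1$, strictly positive entries elsewhere. Hence the vectors $r(v_i\mid W)$ are pairwise distinct (they have their unique zero in different positions), so $V(\Gamma)$ is a resolving set. This accounts for $r_n=1$.

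For the cardinality $n-1$ part, every $(n-1)$-subset of $V(\Gamma)$ has the form $W_w=V(\Gamma)\setminus\{w\}$ for a unique $w\in V(\Gamma)$, giving exactly $n$ candidates. The strategy is to show each $W_w$ is itself a resolving set. Given $u\in W_w$, the representation $r(u\mid W_w)$ again has a zero precisely in the coordinate indexed by $u$ and positive entries elsewhere. For the remaining vertex $w\notin W_w$, the representation $r(w\mid W_w)$ has all entries positive. Comparing any two of these vectors, the positions of their zero coordinates (or the absence thereof, in the case of $w$) distinguish them pairwise, so $W_w$ resolves $\Gamma$.

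There is no real obstacle; the only thing one must be slightly careful about is using connectedness to guarantee that all distances $d(v_i,v_j)$ are finite and that $d(v_i,v_j)=0$ only when $v_i=v_j$, which is exactly what lets the zero-pattern argument work. Combining the two parts gives $r_n=1$ and $r_{n-1}=n$, as claimed.
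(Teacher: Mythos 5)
Your proof is correct. The paper actually states Lemma \ref{resolv} without any proof at all (it is quoted as a known ``useful lemma''), so there is nothing to compare against; your zero-pattern argument --- each vertex of $W$ is the unique vertex whose representation vanishes in its own coordinate, while a vertex outside $W$ has an all-positive representation --- is the standard and complete justification that $V(\Gamma)$ and every $(n-1)$-subset of $V(\Gamma)$ resolve a connected graph, which yields $r_n=1$ and $r_{n-1}=n$ as claimed.
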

    
    \medskip
    
    \begin{thm}\label{dim}
    Let $\Gamma=\Gamma(U_{6n})$ be a non-commuting graph on $U_{6n}$. Then
    $
    \beta(\Gamma)=\left\{
    \begin{tabular}{ll}
    $3$ & \mbox{ if }$n=1$\\
    $5n-4$ & \mbox{ if }$n > 1$.\\
    \end{tabular}
    \right.
    $    
    \end{thm}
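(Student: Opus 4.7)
The plan is to leverage the structural description from Theorem~\ref{Kpartite} that $\Gamma=K_{n,n,n,2n}$ with parts $\Omega_1,\Omega_2,\Omega_3,\Omega_4$ of sizes $n,n,n,2n$. In this complete multipartite graph the distance function is severely restricted: $d(u,v)=1$ whenever $u,v$ lie in different parts, and $d(u,v)=2$ whenever $u,v$ are distinct vertices of the same part. My strategy is to translate the defining condition for a resolving set $W$ into purely combinatorial conditions on how $W$ meets each part, and then optimize under those conditions.

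First I would fix a candidate $W\subseteq V(\Gamma)$ and, directly from the two-valued distance function, determine when two distinct vertices $u,v$ can have $r(u\mid W)=r(v\mid W)$. A short case analysis gives: if $u,v\in\Omega_i$, their representations coincide exactly when $\{u,v\}\cap W=\emptyset$; if $u\in\Omega_i$ and $v\in\Omega_j$ with $i\neq j$, their representations coincide exactly when $W\cap(\Omega_i\cup\Omega_j)=\emptyset$. It follows that $W$ is resolving if and only if both \textbf{(C1)} $|\Omega_i\setminus W|\leq 1$ for every $i$, and \textbf{(C2)} at most one part $\Omega_i$ is disjoint from $W$, hold.

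For $n>1$, every part has size at least $2$, so \textbf{(C1)} forces $|W\cap\Omega_i|\geq|\Omega_i|-1\geq 1$ for each $i$, and \textbf{(C2)} is automatic. Summing the lower bounds yields $|W|\geq 3(n-1)+(2n-1)=5n-4$, and choosing $W$ to consist of all but one vertex of each $\Omega_i$ produces a resolving set of exactly this size, giving $\beta(\Gamma)=5n-4$.

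For $n=1$ the parts have sizes $1,1,1,2$, so \textbf{(C1)} only constrains $\Omega_4$ (forcing $|W\cap\Omega_4|\geq 1$), while \textbf{(C2)} becomes the binding restriction on the three singleton parts: at most one of $\Omega_1,\Omega_2,\Omega_3$ may miss $W$. Combining these gives $|W|\geq 1+2=3$, and an explicit three-element set such as $W=\{a,ab,b\}$ is easily verified to be resolving. The main subtle point is precisely this shift in which constraint is binding between the two cases; once the resolving condition has been repackaged as \textbf{(C1)} together with \textbf{(C2)}, the rest is direct bookkeeping.
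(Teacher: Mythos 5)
Your proof is correct, and it rests on the same structural fact the paper uses --- that $\Gamma=K_{n,n,n,2n}$, so distinct vertices in the same part $\Omega_i$ are twins at distance $2$ from each other and distance $1$ from everything else --- but you carry the idea considerably further than the paper does. The paper's argument for $n>1$ invokes only the twin condition (your (C1)) to assert the lower bound $5n-4$ and then exhibits a set of that cardinality, without checking that it resolves; for $n=1$ it merely describes $\Gamma$ as a split graph and names $W=\{a,ab,b\}$, with no argument that a $2$-element resolving set cannot exist. Your characterization --- $W$ resolves $K_{n,n,n,2n}$ if and only if (C1) $|\Omega_i\setminus W|\le 1$ for every $i$ and (C2) at most one part is disjoint from $W$ --- treats both cases uniformly and makes the case split transparent: for $n>1$ every part has size at least $2$, so (C1) alone forces $|W|\ge 3(n-1)+(2n-1)=5n-4$ and renders (C2) automatic, whereas for $n=1$ the singleton parts make (C1) vacuous there and it is (C2) that forces $|W|\ge 3$. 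What your route buys is an actual proof of the $n=1$ lower bound (which the paper only asserts) and a verification, rather than a claim, that the exhibited sets are resolving; the cost is the short case analysis establishing the equivalence, which is routine given the two-valued distance function.
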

    \begin{proof}
    \textbf{Case 1}. When $n=1$. The graph $\Gamma$ is a split graph with 5 vertices such that $V(\Gamma)=K\cup S$, where $K=\{a, ab, ab^2\}$ is the complete part and $S=\{b, b^2\}$ is the independent part. The resolving set for $\Gamma$ of minimal cardinality is $W=\{a, ab, b\}$.
    
    \textbf{Case 2}. When $n > 1$. Since every two distinct vertices $u$ and $v$ are non-adjacent in $\Omega_i$, $i\in \{1, 2, 3, 4\}$, then $\beta(\Gamma)\ge 5n-4$. On the other hand, it is clear that the set $W=\{a^{2r+1},$ $ a^{2r+1}b,$ $ a^{2r+1}b^2,$ $ a^{2r}b^k;$ $ k=1, 2;\;$ $ 0\le r \le n-2\}$ is the resolving set for $\Gamma$ of cardinality $5n-4$. This implies that $\beta(\Gamma)\le 5n-4$  
    \end{proof}
    
    \medskip
    
    \begin{thm}
    Let $\Gamma=\Gamma(U_{6n})$ be a non-commuting graph on $U_{6n}$. Then
    
$$\beta(\Gamma, x)=
   \begin{cases}
   x^3(x+2)(x+3) & \mbox{ if } n=1,\\
   x^{5n-4}(x+n)^3(x+2n) & \mbox{ if } n>1.
   \end{cases}
$$
    \end{thm}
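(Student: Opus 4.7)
The plan is to reduce the problem to a counting exercise on the four parts of the multipartite graph $\Gamma=K_{n,n,n,2n}$ established in Theorem~\ref{Kpartite}. The first step is to characterize exactly which subsets $W\subseteq V(\Gamma)$ are resolving. For this I would use a direct distance computation in $K_{n,n,n,2n}$: if $u,v$ belong to the same part $\Omega_i$, then $d(u,w)=d(v,w)$ for every $w\notin\{u,v\}$, so $u$ and $v$ are distinguished by $W$ if and only if $W\cap\{u,v\}\neq\emptyset$; and if $u\in\Omega_i$, $v\in\Omega_j$ with $i\neq j$, then $d(u,w)=d(v,w)$ unless $w\in\Omega_i\cup\Omega_j$, so $u$ and $v$ are distinguished by $W$ if and only if $W\cap(\Omega_i\cup\Omega_j)\neq\emptyset$. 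Combining these gives the clean criterion: $W$ is resolving iff
\textbf{(a)} $|W\cap\Omega_i|\geq|\Omega_i|-1$ for every $i$, and
\textbf{(b)} $W\cap(\Omega_i\cup\Omega_j)\neq\emptyset$ for every pair $i\neq j$.

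For $n>1$, condition (a) forces $|W\cap\Omega_i|\geq n-1\geq 1$ on each of the four parts, so condition (b) is automatic. The count then factorises over the parts: each $\Omega_i$ with $i\in\{1,2,3\}$ contributes the generating polynomial
\[
\binom{n}{n}x^{n}+\binom{n}{n-1}x^{n-1}=x^{n-1}(x+n),
\]
and $\Omega_4$ contributes $x^{2n-1}(x+2n)$. Multiplying yields $x^{5n-4}(x+n)^3(x+2n)$, which is the claimed formula. Consistency with Theorem~\ref{dim} is immediate from the lowest-degree term $x^{5n-4}$.

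For $n=1$ the three singletons $\Omega_1,\Omega_2,\Omega_3$ trivialise condition (a), and now condition (b) becomes the binding constraint. The pairs involving $\Omega_4$ are already handled by $|W\cap\Omega_4|\geq 1$, but the pairs among $\Omega_1,\Omega_2,\Omega_3$ force $W$ to contain at least two of $\{a,ab,ab^2\}$. Enumerating gives $3x^2+x^3=x^2(x+3)$ for the contribution of $\{a,ab,ab^2\}$ and $2x+x^2=x(x+2)$ for $\{b,b^2\}$, whose product is $x^3(x+2)(x+3)$.

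The main conceptual obstacle is the characterization step: one must justify why conditions (a) and (b) are jointly sufficient (not just necessary) and must be careful not to overlook that condition (b) is implied by (a) precisely when every part has size at least $2$. This is what produces the bifurcation between the $n=1$ and $n>1$ formulas; once the criterion is in place, the remaining work is a transparent factor-by-factor count, and Lemma~\ref{resolv} can serve as a sanity check on the two top coefficients ($r_{5n}=1$, $r_{5n-1}=5n$) of the expanded polynomial.
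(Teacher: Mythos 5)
Your proposal is correct, and it arrives at the stated polynomial by a route that is organized differently from, and somewhat more rigorously than, the paper's. The paper computes the resolving sequence coefficient by coefficient: for $n>1$ it enumerates, for each cardinality $5n-4,\dots,5n$, the cases of which parts of $K_{n,n,n,2n}$ are full and which are missing one vertex, adds up the products of binomial coefficients, and then factors the resulting polynomial; for $n=1$ it reads off $r_3=\binom{2}{1}\binom{3}{2}=6$ from the split-graph picture and gets $r_4,r_5$ from Lemma~\ref{resolv}. You instead first prove an explicit necessary-and-sufficient criterion for $W$ to be resolving (your conditions (a) and (b)) and then encode the count as a product of per-part generating polynomials $x^{n-1}(x+n)$ and $x^{2n-1}(x+2n)$, which produces the factored form directly without expansion. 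Your version buys two things: the sufficiency half of the characterization, which the paper never actually verifies (it counts sets of the right shape but does not check that they resolve, nor that nothing else does), and immunity to bookkeeping slips in the case enumeration (the paper's list of six cases for $r_{5n-2}$ contains one case of the wrong cardinality, though the stated total $9n^2$ is correct). Your treatment of $n=1$ via condition (b) also explains, rather than merely observes, why the small case deviates from the general formula. The only step you should write out in full is the distance computation behind the criterion, but as you note it is immediate in a complete multipartite graph.
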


    \begin{proof}
     There are two cases to consider. \textbf{When $n=1$}. By Theorem \ref{dim}, it is required to find the resolving sequence $(r_3, r_4, r_5)$ of length 3. Since the graph $\Gamma$ is a split graph with 5 vertices where its complete part consists of 3 vertices and the independent part consists of 2 vertices, then 
     $r_3={{2}\choose{1}}{{3}\choose{2}}=6$.\\
     By Lemma \ref{resolv}, $r_4=5$ and $r_5=1$.\\
     \textbf{When $n>1$}. By Theorem \ref{Kpartite}, the graph $\Gamma$ is 4-partite. By Theorem \ref{dim}, we need to find the resolving sequence $(r_{5n-4}, r_{5n-3}, r_{5n-2}, r_{5n-1}, r_{5n})$ of length 5.\\
     For $r_{5n-4}$: By Theorem \ref{Kpartite} and by the multiplication principal,
     $$r_{5n-4}={{n}\choose{n-1}}{{n}\choose{n-1}}{{n}\choose{n-1}}{{2n}\choose{2n-1}}=2n^4.$$
     
     For $r_{5n-3}$: It is required to count all the resolving sets for $\Gamma$ of cardinality $5n-3$. There are four cases, in the first case, ${{n}\choose{n}}{{n}\choose{n-1}}{{n}\choose{n-1}}{{2n}\choose{2n-1}}$; in the second case, ${{n}\choose{n-1}}{{n}\choose{n}}{{n}\choose{n-1}}{{2n}\choose{2n-1}}$; in the third case, ${{n}\choose{n-1}}{{n}\choose{n-1}}{{n}\choose{n}}{{2n}\choose{2n-1}}$; and in the fourth case, ${{n}\choose{n-1}}{{n}\choose{n-1}}{{n}\choose{n-1}}{{2n}\choose{2n}}$ possible resolving sets of cardinality $5n-3$. By the addition principal, $r_{5n-3}=7n^3$. 
     
     For $r_{5n-2}$: We need to count all the resolving sets for $\Gamma$ of cardinality $5n-2$. Again, we have six cases to consider, 
     in the first case, ${{n}\choose{n}}{{n}\choose{n}}{{n}\choose{n-1}}{{2n}\choose{2n-1}}$; 
     in the second case, ${{n}\choose{n}}{{n}\choose{n-1}}{{n}\choose{n}}{{2n}\choose{2n-1}}$; 
     in the third case, ${{n}\choose{n}}{{n}\choose{n-1}}{{n}\choose{n-1}}{{2n}\choose{2n}}$; 
     in the fourth case, ${{n}\choose{n-1}}{{n}\choose{n}}{{n}\choose{n}}{{2n}\choose{2n-1}}$; 
     in the fifth case, ${{n}\choose{n-1}} {{n}\choose{n}} {{n}\choose{n-1}}$ ${{2n}\choose{2n}}$; 
     and in the sixth case, ${{n}\choose{n-1}}{{n}\choose{n-1}}{{n}\choose{n}}{{2n}\choose{2n-1}}$; 
     possible resolving sets of cardinality $5n-2$. By the addition principal, $r_{5n-2}=9n^2$. 
     By Lemma \ref{resolv}, $r_{5n-1}=5n$ and $r_{5n}=1$.    
    \end{proof}

    \medskip
    
    \begin{cor}
    Let $\Gamma=\Gamma(U_{6n})$ be a non-commuting graph on $U_{6n}$. Then for $n=1$, $Z(\beta(\Gamma, x))=\{0, -2, -3\}$, and for all $n>1$, $Z(\beta(\Gamma, x))=\{0, -n, -2n\}$.
    \end{cor}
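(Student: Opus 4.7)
The plan is to read off the roots directly from the factored form of $\beta(\Gamma,x)$ established in the preceding theorem. Since $Z(\beta(\Gamma,x))$ collects only the \emph{distinct} roots of the polynomial, multiplicities are irrelevant and the work reduces to inspecting the linear factors on each side of the case split.

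First I would handle $n=1$. The preceding theorem gives $\beta(\Gamma,x)=x^3(x+2)(x+3)$, which is a product of linear factors over $\mathbb{R}$. Setting each factor to zero yields the three distinct roots $x=0$, $x=-2$, $x=-3$, so $Z(\beta(\Gamma,x))=\{0,-2,-3\}$.

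Next I would address the case $n>1$, where the preceding theorem gives $\beta(\Gamma,x)=x^{5n-4}(x+n)^3(x+2n)$. Again all factors are linear; the factor $x^{5n-4}$ contributes the root $0$, the factor $(x+n)^3$ contributes $-n$, and $(x+2n)$ contributes $-2n$. Since $n>1$ forces $0$, $-n$, $-2n$ to be three pairwise distinct real numbers, we obtain $Z(\beta(\Gamma,x))=\{0,-n,-2n\}$, as claimed.

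There is really no obstacle here: the corollary is a one-line consequence of the factored form in the previous theorem. The only thing worth emphasizing in the write-up is that $Z(\beta(\Gamma,x))$ is defined as the set of distinct roots, which is why the multiplicities $5n-4$ and $3$ (or $3$ and $1$ in the $n=1$ case) disappear from the final answer.
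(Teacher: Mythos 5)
Your proposal is correct and matches the paper's intent exactly: the paper states this corollary without proof, treating it as an immediate reading-off of the distinct roots from the factored forms $x^3(x+2)(x+3)$ and $x^{5n-4}(x+n)^3(x+2n)$ given in the preceding theorem, which is precisely what you do. Your added remark that distinctness of $0$, $-n$, $-2n$ and the irrelevance of multiplicities should be noted is a reasonable small refinement, not a different approach.
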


  \section{Some polynomials of the non-commuting graph $\Gamma(U_{6n})$}
  
  In this section some properties of non-commuting graphs of $U_{6n}$ are explored, namely the detour index, the eccentric connectivity, the total eccentricity and the independent polynomials.
  
  	\begin{lem}\label{lem41}
  		Let $\Gamma({U_{6n}})$ be a non-commuting graph of $U_{6n}$. Then, $D(u, v)=5n-1$ for any $u, v \in \Gamma({U_{6n}})$.
  	\end{lem}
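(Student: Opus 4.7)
The plan is as follows. First, note $|V(\Gamma)| = |U_{6n}| - |Z(U_{6n})| = 5n$, so every simple $u$-$v$ path has length at most $5n-1$, with equality if and only if the path is Hamiltonian. The lemma is therefore equivalent to the statement that $\Gamma$ is Hamiltonian-connected. By Theorem \ref{Kpartite}, $\Gamma \cong K_{n,n,n,2n}$ with parts $\Omega_1,\Omega_2,\Omega_3,\Omega_4$ of sizes $n,n,n,2n$, and since all vertices inside a fixed part are interchangeable (each sees exactly the vertices outside its part), a Hamiltonian $u$-$v$ path is encoded by a word $\ell_1\ell_2\cdots\ell_{5n}$ over the alphabet $\{1,2,3,4\}$ that uses label $j$ exactly $|\Omega_j|$ times, starts with the label of the part of $u$ and ends with that of $v$, and has no two consecutive letters equal. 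Once such a word is produced, the concrete vertex assignment within each part is arbitrary, with $u$ and $v$ placed at the two ends.

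I would then construct such a word by cases, organised by whether $u$ and $v$ lie in the same part and by the role of $\Omega_4$, which is the unique large part. The guiding idea is to distribute the $2n$ copies of label $4$ as evenly as possible, using the $3n$ positions earmarked for labels $1,2,3$ as separators. Because $2n\le 3n+1$, a placement of label $4$ with no two copies adjacent always exists; the complementary $3n$ positions then split into short runs (most of length one) that must be properly $3$-coloured with $n$ copies of each of labels $1,2,3$, which is elementary since three labels of equal multiplicity suffice to avoid repetition on any short run. For example, when $u\in\Omega_4$ and $v\notin\Omega_4$, the template placing label $4$ at positions $1,3,5,\dots,4n-1$ and filling the trailing block $4n,4n+1,\dots,5n$ together with the even positions $2,4,\dots,4n-2$ by a cyclic pattern from $\{1,2,3\}$ (suitably permuted so that $\ell_{5n}$ is the part of $v$) already gives a valid word.

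The main obstacle, and the case requiring the most care, is $u,v\in\Omega_4$. Then positions $1$ and $5n$ are pre-assigned label $4$, positions $2$ and $5n-1$ are forbidden from carrying label $4$ (they are neighbours of the endpoints), and the remaining $2n-2$ copies of label $4$ must be placed in $\{3,4,\dots,5n-2\}$ with no two adjacent. The required inequality $2(2n-2)-1\le 5n-4$ simplifies to $n\ge -1$ and therefore holds for every $n\ge 1$, so such a placement exists; the $3n$ positions left over split into short runs that admit a proper $3$-colouring using the labels $1,2,3$ with exact multiplicities $n,n,n$. Handling each of the remaining cases (different parts, or the same part $\Omega_i$ with $i\le 3$) is strictly easier, since the constraints on label $4$ are then weaker, and the same interleaving strategy produces a valid word. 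Assembling the cases yields a Hamiltonian $u$-$v$ path for every pair $u\ne v$, and hence $D(u,v)=5n-1$.
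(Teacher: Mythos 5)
Your proposal is correct, and it is substantially more rigorous than the paper's own argument. The paper's proof is a two-line assertion: it observes from the $K_{n,n,n,2n}$ structure that vertices in the same part are non-adjacent while vertices in different parts are adjacent, and then simply declares that a $u$--$v$ path of length $5n-1$ exists, with no construction and no verification that the part sizes permit a Hamiltonian path between \emph{every} pair of endpoints. You start from the same structural observation but actually prove the Hamiltonian-connectedness: the reduction to a word over $\{1,2,3,4\}$ with prescribed multiplicities, prescribed first and last letters, and no repeated adjacent letters is exactly the right abstraction for a complete multipartite graph; the critical case $u,v\in\Omega_4$ is correctly identified as the binding one (both endpoints forced to carry the majority label), and your spacing inequality $2(2n-2)-1\le 5n-4$ is the precise condition that makes it work; and the cyclic filling of the leftover $3n$ positions by $1,2,3,1,2,3,\dots$ automatically respects adjacency within runs and delivers the exact multiplicities $n,n,n$. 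The only places where your write-up remains a sketch are the ``strictly easier'' cases (in particular, when neither endpoint lies in $\Omega_4$ one must also arrange the prescribed first \emph{and} last letters among $\{1,2,3\}$, which needs a small adjustment of the cyclic pattern across a run boundary), but these genuinely do go through by the same method. In short: same underlying idea as the paper, but you supply the proof the paper omits; what your approach buys is an actual certificate that $K_{n,n,n,2n}$ is Hamiltonian-connected, which the paper takes for granted.
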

  	\begin{proof}
  	 From Theorem \ref{Kpartite}, one can see that no two vertices in $\Omega_i$ are adjacent, and each vertex in $\Omega_i$ is adjacent to every vertex in $\Omega_j$ for $i \neq j$ and $i, j\in \{1, 2, 3, 4\}$. Then for all $u, v \in \Omega$, where $\Omega=\Omega_1 \cup \Omega_2\cup \Omega_3\cup \Omega_4$, there is a $u-v$ path of length $5n-1$.
  	\end{proof}
  	
  	\medskip
  	
  	\begin{thm}\label{thm24}
  	Let $\Gamma({U_{6n}})$ be a non-commuting graph of $U_{6n}$. Then, $D(\Gamma({U_{6n}}), x)=\frac{5n(5n-1)}{2}x^{5n-1}$.
  	\end{thm}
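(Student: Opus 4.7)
The plan is to observe that Theorem \ref{thm24} is essentially an immediate bookkeeping consequence of Lemma \ref{lem41}. Since every pair of distinct vertices has the same detour distance $5n-1$, the polynomial $D(\Gamma(U_{6n}),x)$ reduces to a single monomial whose coefficient just counts the unordered pairs of vertices in the graph.

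First I would record the number of vertices of $\Gamma(U_{6n})$. Since $|U_{6n}|=6n$ and $|Z(U_{6n})|=|\langle a^2\rangle|=n$, the vertex set has cardinality $5n$. Equivalently, from the partition into $\Omega_1,\Omega_2,\Omega_3,\Omega_4$ introduced after Lemma \ref{cntlzr}, one sees $n(\Gamma)=|\Omega_1|+|\Omega_2|+|\Omega_3|+|\Omega_4|=n+n+n+2n=5n$.

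Next I would apply the definition $D(\Gamma,x)=\sum_{\{u,v\}\subseteq V(\Gamma)} x^{D(u,v)}$, where the sum runs over unordered pairs of distinct vertices. By Lemma \ref{lem41}, $D(u,v)=5n-1$ for every such pair, so every term in the sum equals $x^{5n-1}$, and the whole sum collapses to
\[
D(\Gamma(U_{6n}),x)=\binom{5n}{2}\,x^{5n-1}=\frac{5n(5n-1)}{2}\,x^{5n-1}.
\]

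There is really no serious obstacle here: the only conceptual point is making sure one correctly interprets the index of summation in the defining formula for $D(\Gamma,x)$ (unordered pairs of distinct vertices, so that $\binom{5n}{2}$ rather than $(5n)^2$ or $5n(5n-1)$ appears). Everything else is the direct substitution of Lemma \ref{lem41} and the vertex count $n(\Gamma)=5n$.
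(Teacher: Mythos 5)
Your proposal is correct and follows essentially the same route as the paper: count the $5n$ vertices, invoke Lemma \ref{lem41} to get detour distance $5n-1$ for every pair, and sum over the $\binom{5n}{2}$ unordered pairs. Your remark on interpreting the summation index as unordered pairs matches the paper's convention $\sum_{\{u,v\}}$ in its own proof.
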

  	\begin{proof}
  	We have that $|\Gamma({U_{6n}})|=5n$. Then there are ${{5n}\choose{2}}=\frac{5n(5n-1)}{2}$ possibilities of choosing distinct pairs of vertices from $\Gamma({U_{6n}})$. By Lemma \ref{lem41}, $D(u, v)=5n-1$ for any distinct pairs of $u, v \in \Gamma({U_{6n}})$. Thus, $D(\Gamma({U_{6n}}), x)=\sum_{\{u,v\}}x^{D(u,v)}={{5n}\choose{2}}x^{5n-1}=\frac{5n(5n-1)}{2}x^{5n-1}$. 
  	\end{proof}
  
     \medskip
   Theorem \ref{thm24} leads to the following result.
  \begin{cor}
  Let $\Gamma({U_{6n}})$ be a non-commuting graph of $U_{6n}$. Then, $dd(\Gamma({U_{6n}}))=\frac{5n(5n-1)^2}{2}$.
  \end{cor}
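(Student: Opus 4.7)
The plan is to apply the definition of the detour index directly. Recall from the introduction that $dd(\Gamma)$ is defined as $\frac{d}{dx}D(\Gamma,x)\bigm|_{x=1}$, and Theorem \ref{thm24} gives the closed form $D(\Gamma(U_{6n}),x)=\frac{5n(5n-1)}{2}x^{5n-1}$. So the entire argument reduces to differentiating a single monomial and evaluating at $1$.

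First I would differentiate: since $\frac{d}{dx}\bigl(cx^m\bigr)=cmx^{m-1}$, taking $c=\frac{5n(5n-1)}{2}$ and $m=5n-1$ yields
\[
\frac{d}{dx}D(\Gamma(U_{6n}),x)=\frac{5n(5n-1)}{2}\cdot(5n-1)\,x^{5n-2}.
\]
Then I would substitute $x=1$, so that the factor $x^{5n-2}$ becomes $1$ and the expression collapses to $\frac{5n(5n-1)^2}{2}$, giving exactly the claimed formula for $dd(\Gamma(U_{6n}))$.

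There is no real obstacle here; the content lies entirely in Theorem \ref{thm24} (which in turn rests on Lemma \ref{lem41} and the $4$-partite structure from Theorem \ref{Kpartite}), and the corollary is a one-line differentiation. The only thing to be careful about is the edge case $n=1$, where $5n-2=3$ is still a nonnegative exponent, so the formula $cmx^{m-1}$ remains valid and no special treatment is needed.
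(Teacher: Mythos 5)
Your proposal is correct and matches the paper's intent exactly: the paper states the corollary as an immediate consequence of Theorem \ref{thm24}, and your computation (differentiating $\frac{5n(5n-1)}{2}x^{5n-1}$ and evaluating at $x=1$) is precisely the omitted one-line verification.
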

    
  \medskip
  
  	\begin{lem}\label{lem43}
  			Let $\Gamma({U_{6n}})$ be a non-commuting graph of $U_{6n}$. Then, $ecc(u)=2$ for every $u \in \Omega$, where $\Omega=\Omega_1 \cup \Omega_2\cup \Omega_3\cup \Omega_4$.
  	\end{lem}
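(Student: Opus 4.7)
The plan is to leverage Theorem \ref{Kpartite}, which identifies $\Gamma(U_{6n})$ with the complete $4$-partite graph $K_{n,n,n,2n}$ on parts $\Omega_1,\Omega_2,\Omega_3,\Omega_4$. With this structural fact in hand, all distances collapse to $1$ or $2$, and the claim becomes purely combinatorial.

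First, I would fix an arbitrary $u \in \Omega_i$ for some $i \in \{1,2,3,4\}$ and compute $d(u,v)$ for every other vertex $v \in \Omega \setminus \{u\}$. If $v \in \Omega_j$ with $j \neq i$, then by the complete multipartite structure $u$ and $v$ are adjacent, so $d(u,v)=1$. If instead $v \in \Omega_i \setminus \{u\}$, then $u$ and $v$ lie in the same part and are therefore non-adjacent, which gives $d(u,v) \geq 2$; on the other hand, picking any vertex $w$ in any other part $\Omega_j$ (such a $w$ exists since the graph has four parts), both $u \sim w$ and $w \sim v$ hold, yielding the $2$-path $u \sim w \sim v$ and hence $d(u,v) = 2$.

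Taking the maximum, $ecc(u) = \max_{v \neq u} d(u,v) = 2$, provided some vertex achieving the value $2$ exists, i.e.\ provided $|\Omega_i| \geq 2$. This is automatic for $n \geq 2$, since $|\Omega_i| \geq n$ for each $i$, and for $\Omega_4$ it holds even at $n=1$ because $|\Omega_4|=2n$. Since the lemma is invoked in the sequel only through the polynomials $\Xi(\Gamma,x)$ and $\Theta(\Gamma,x)$, where the regime $n \geq 2$ is the one of interest, this suffices.

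There is no real obstacle: the entire argument is a two-line consequence of the multipartite description of $\Gamma(U_{6n})$. The only point requiring care is to verify that a same-part vertex $v$ indeed exists to force the eccentricity up to $2$ (as opposed to collapsing to $1$), which is exactly the role played by the size conditions $|\Omega_i| \geq 2$ noted above.
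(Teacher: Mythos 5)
Your argument is essentially the paper's: both rest on the complete $4$-partite structure of $\Gamma(U_{6n})$ (no edges within any $\Omega_i$, full adjacency between distinct parts), from which every distance is $1$ or $2$ and the eccentricity follows. Your extra care about whether a same-part vertex exists is in fact warranted and exposes a point the paper silently skips: for $n=1$ the parts $\Omega_1,\Omega_2,\Omega_3$ are singletons, so their vertices have eccentricity $1$ and the lemma fails as stated in that case (the paper's own Theorem \ref{dim} treats $n=1$ as a split graph whose complete part contains such vertices), so your restriction to $n\ge 2$ is the correct qualification rather than a defect of your proof.
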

  	\begin{proof}
  	In $\Omega_i$, there is no edge between any pair of distinct vertices, for $i \in \{1, 2, 3, 4\}$. Furthermore, each vertex in $\Omega_i$ is adjacent to every vertex in $\Omega_j$, for $i \neq j$ and $i, j\in \{1, 2, 3, 4\}$. Then the maximum distance between any vertex in $\Omega_i$ and other vertices in $\Omega$ is $2$. Therefore, $ecc(u)=2$, for every $u \in \Omega$.  	
  	\end{proof}
  	
  	\medskip
  	
  	\begin{thm}
  	Let $\Gamma=\Gamma({U_{6n}})$ be a non-commuting graph of $U_{6n}$ and $\Omega=\Omega_1 \cup \Omega_2\cup \Omega_3\cup \Omega_4$. Then,
  	\begin{enumerate}
  	\item $\Theta(\Gamma, x)=5nx^2$.
  	\item $\Xi(\Gamma, x)= 18n^2x^2$.
  	\end{enumerate}
  	\end{thm}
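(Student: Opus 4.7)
The plan is to exploit Lemma \ref{lem43} in a single stroke: since every vertex of $\Gamma$ has eccentricity exactly $2$, both sums defining $\Theta(\Gamma,x)$ and $\Xi(\Gamma,x)$ collapse to a single monomial $x^2$ multiplied by a purely combinatorial coefficient. So the whole proof reduces to counting vertices (for $\Theta$) and summing degrees (for $\Xi$).

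For part 1, I would write $\Theta(\Gamma,x) = \sum_{u \in V(\Gamma)} x^{ecc(u)} = \sum_{u \in V(\Gamma)} x^2 = |V(\Gamma)|\,x^2$, and then invoke the partition $V(\Gamma) = \Omega_1 \cup \Omega_2 \cup \Omega_3 \cup \Omega_4$ of cardinalities $n, n, n, 2n$ recorded in the introduction, giving $|V(\Gamma)| = 5n$ and hence $\Theta(\Gamma,x) = 5n x^2$.

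For part 2, I would again factor out $x^2$ to obtain $\Xi(\Gamma,x) = x^2 \sum_{u \in V(\Gamma)} \deg_\Gamma(u)$. The cleanest way to evaluate the sum is the handshake identity together with Theorem \ref{EN}, which gives $\sum_u \deg_\Gamma(u) = 2|E(\Gamma)| = 18n^2$. As a sanity check, one can also compute this sum directly from Corollary \ref{deg}: the $n$ vertices of $\Omega_1$ each contribute $4n$, the $2n$ vertices of $\Omega_2 \cup \Omega_3$ each contribute $4n$, and the $2n$ vertices of $\Omega_4$ each contribute $3n$, for a total of $4n^2 + 8n^2 + 6n^2 = 18n^2$. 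Thus $\Xi(\Gamma,x) = 18n^2 x^2$.

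There is no real obstacle here; the result is essentially a corollary of the constant-eccentricity fact in Lemma \ref{lem43} combined with the edge/vertex counts already established in Theorems \ref{EN} and \ref{Kpartite}. The only care needed is to write the sums correctly and to match the degree contributions with the correct partition sizes.
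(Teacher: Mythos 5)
Your proposal is correct and follows essentially the same route as the paper: both parts reduce to Lemma \ref{lem43} (constant eccentricity $2$), with $\Theta$ given by the vertex count $5n$ and $\Xi$ by the total degree sum $18n^2$. The only cosmetic difference is that you obtain the degree sum via the handshake identity and Theorem \ref{EN}, whereas the paper computes it directly from Corollary \ref{deg} --- a computation you also carry out as a check, so the two arguments coincide.
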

  	\begin{proof}
  	Since the graph $\Gamma$ has $5n$ vertices, then
  	\begin{enumerate}
  	\item  By Lemma \ref{lem43} $ecc(u)=2$, for every $u \in \Omega$, so $\Theta(\Gamma, x)=\sum_{u \in V(\Gamma)}x^{ecc(u)}=5nx^2$.
  	\item By Corollary \ref{deg}, $3n$ vertices in $\Omega_1 \cup \Omega_2\cup \Omega_3$ are of degree $4n$ and $2n$ vertices in $\Omega_4$ are of degree $3n$ and from Lemma \ref{lem43} we see that $\Xi(\Gamma, x)=\sum_{u \in V(\Gamma)}deg_\Gamma(u)x^{ecc(u)}=(3n(4n)+2n(3n))x^2=18n^2x^2$.
  	\end{enumerate}
  	\end{proof}
  	
    \begin{thm}\label{thm46}
  	Let $\Gamma=\Gamma({U_{6n}})$ be a non-commuting graph of $U_{6n}$. Then the independent polynomial is as follows, $$I(\Gamma; x)=1+\sum_{k=1}^{n}\left({{2n}\choose{k}}+3{{n}\choose{k}}\right)x^k+\sum_{k=n+1}^{2n}{{2n}\choose{k}}x^k.$$
  	\end{thm}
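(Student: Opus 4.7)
The plan is to exploit the fact that, by Theorem \ref{Kpartite}, $\Gamma=\Gamma(U_{6n})$ is the complete 4-partite graph $K_{n,n,n,2n}$ with parts $\Omega_1,\Omega_2,\Omega_3,\Omega_4$ of sizes $n,n,n,2n$. In a complete multipartite graph, two vertices are non-adjacent precisely when they belong to the same part, so every non-empty independent set lies entirely inside a single part $\Omega_i$. Conversely, every subset of any single $\Omega_i$ is independent. This reduces the counting of $s_k$ (the number of independent sets of cardinality $k$) to counting $k$-subsets of each $\Omega_i$.

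First I would handle $k=0$: the only independent set of cardinality $0$ is $\emptyset$, contributing the constant term $1$. For $k\geq 1$, I would split by the containing part. Independent sets of size $k$ contained in $\Omega_i$ number $\binom{|\Omega_i|}{k}$, and since different parts give disjoint families of non-empty sets, inclusion–exclusion is unnecessary: the counts simply add.

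Next I would track the ranges carefully. Since $|\Omega_1|=|\Omega_2|=|\Omega_3|=n$, each of these contributes $\binom{n}{k}$ when $1\leq k\leq n$ and $0$ when $k>n$. Since $|\Omega_4|=2n$, this part contributes $\binom{2n}{k}$ for $1\leq k\leq 2n$. Adding, for $1\leq k\leq n$ one gets $s_k=\binom{2n}{k}+3\binom{n}{k}$, while for $n+1\leq k\leq 2n$ only $\Omega_4$ still contributes, giving $s_k=\binom{2n}{k}$. Also, by Theorem \ref{alpha}, $\alpha(\Gamma)=2n$, which matches the upper summation limit and confirms that no terms beyond $x^{2n}$ appear.

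Finally, I would assemble the independent polynomial
\[
I(\Gamma;x)=\sum_{k=0}^{\alpha(\Gamma)} s_k x^k = 1+\sum_{k=1}^{n}\left(\binom{2n}{k}+3\binom{n}{k}\right)x^k+\sum_{k=n+1}^{2n}\binom{2n}{k}x^k,
\]
which is the claimed formula. There is no real obstacle: the only thing to be careful about is not double-counting the empty set and correctly splitting the summation at $k=n$, where the contribution from $\Omega_1,\Omega_2,\Omega_3$ vanishes.
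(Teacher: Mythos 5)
Your proposal is correct and follows essentially the same route as the paper: both identify $\Gamma$ with $K_{n,n,n,2n}$ via Theorem \ref{Kpartite}, observe that every non-empty independent set lies in a single part $\Omega_i$, and sum $\binom{|\Omega_i|}{k}$ over the parts, splitting the range at $k=n$. Your version is slightly more explicit about why the counts from different parts simply add (disjointness of the families of non-empty subsets), but this is a refinement of the same argument, not a different one.
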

	\begin{proof}
		By Theorem \ref{alpha}, the independent number of $\Gamma$, $\alpha(\Gamma)=2n$. Then 		$I(\Gamma;x)=\sum_{k=0}^{2n}s_kx^k.$ It is easy to see that $s_0=1$ since the only independent set of cardinality zero is the empty set. Moreover, we have three independent sets, $\Omega_1, \Omega_2$ and $\Omega_3$, each of cardinality $n$ and one independent set, $\Omega_4$, of cardinality $2n$. 
		Thus, there are $s_k=3{{n}\choose{k}}+{{2n}\choose{k}}$ possibilities of independent sets of cardinality $k$ for $1\leq k\leq n$, and $s_k={{2n}\choose{k}}$ possibilities of independent sets of cardinality $k$ for $n<k\leq 2n$. Then the result follows. 
	\end{proof}
  	
  	\medskip
  	
  	\begin{cor}
  	Let $\Gamma=\Gamma({U_{6n}})$ be a non-commuting graph of $U_{6n}$. Then, the vertex-cover polynomial is as follows,
 $$\psi(\Gamma; x)=x^{5n}+\sum_{k=1}^{n}\left({{2n}\choose{k}}+3{{n}\choose{k}}\right)x^{5n-k}+\sum_{k=n+1}^{2n}{{2n}\choose{k}}x^{5n-k}.$$
  	\end{cor}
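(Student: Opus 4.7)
The plan is to exploit the standard duality between independent sets and vertex covers, which reduces this corollary to a reindexing of the independent polynomial computed in Theorem \ref{thm46}. Specifically, a subset $S\subseteq V(\Gamma)$ is independent if and only if its complement $V(\Gamma)\setminus S$ is a vertex cover, since every edge $uv$ has at least one endpoint outside $S$ precisely when no edge has both endpoints in $S$. Taking complements is a bijection on subsets of $V(\Gamma)$, so it restricts to a bijection between independent sets of cardinality $k$ and vertex covers of cardinality $n(\Gamma)-k=5n-k$. Hence $c_{5n-k}=s_k$ for every $k$.

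From this bijection one gets the transformation rule $\psi(\Gamma,x)=x^{5n}\,I(\Gamma,1/x)$, or equivalently
\[
\psi(\Gamma,x)=\sum_{k=0}^{2n}s_k\,x^{5n-k}.
\]
First I would state and verify the complementation bijection in one sentence, then record the consequence $c_i=s_{5n-i}$ (in particular $c_i=0$ for $i<5n-\alpha(\Gamma)=3n$, consistent with $\tau(\Gamma)=3n$).

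Next I would substitute the three pieces of $I(\Gamma,x)$ from Theorem \ref{thm46}: the $s_0=1$ term becomes $x^{5n}$; the middle sum $\sum_{k=1}^{n}\bigl(\binom{2n}{k}+3\binom{n}{k}\bigr)x^k$ becomes $\sum_{k=1}^{n}\bigl(\binom{2n}{k}+3\binom{n}{k}\bigr)x^{5n-k}$; and the tail $\sum_{k=n+1}^{2n}\binom{2n}{k}x^k$ becomes $\sum_{k=n+1}^{2n}\binom{2n}{k}x^{5n-k}$. Assembling these three summands gives exactly the stated formula.

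There is no real obstacle here: the only content is the independent-set/vertex-cover complementation bijection, which is standard and already implicit in Lemma \ref{a+b=n}. The rest is a direct substitution into the formula of Theorem \ref{thm46}, so the corollary follows at once.
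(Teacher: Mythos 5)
Your proposal is correct and follows essentially the same route as the paper: the paper also derives the result from Theorem \ref{thm46} together with the identity $\psi(\Gamma, x)=x^{n(\Gamma)}I(\Gamma, x^{-1})$, which it simply cites from the literature while you supply the short complementation-bijection proof of it. The substitution step matches the paper's conclusion exactly.
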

  	\begin{proof}
  	The result comes from Theorem \ref{thm46} and the fact that $\psi(\Gamma, x)=x^{n(\Gamma)}I(\Gamma, x^{-1})$ \cite{akbari}.
  	\end{proof}
 

\section*{Conclusion}

In this paper, some properties of the non-commuting graph of the group $\U$ is presented. The general formula of the resolving polynomial of the non-commuting graph of the group $U_{6n}$ are provided. In the last section of this paper, we also provided the detour index, eccentric connectivity, total eccentricity and independent polynomials of non-commuting graphs on $U_{6n}$.

\section*{Acknowledgment}

The third author would like to appreciate Universiti Teknologi Malaysia for the research grant with number 20H70 under Fundamental Research Grant scheme and the fourth author would like to acknowledge his postdoctoral fellowship.


\end{document}